%%%%%%%%%%%%%%%%%%%%%%%%%%%%%%%%%%%%%%%%%%%%%%%%%%%%%%%%%%%%%%%%%%%%%%%%%%%%%%%%
%2345678901234567890123456789012345678901234567890123456789012345678901234567890
%        1         2         3         4         5         6         7         8

%!TEX TS-program =  pdflatex

\documentclass[letterpaper, 10 pt, conference]{ieeeconf}  % Comment this line out if you need a4paper

\IEEEoverridecommandlockouts                              % This command is only needed if 
                                                          % you want to use the \thanks command

\overrideIEEEmargins                                      % Needed to meet printer requirements.

%In case you encounter the following error:
%Error 1010 The PDF file may be corrupt (unable to open PDF file) OR
%Error 1000 An error occurred while parsing a contents stream. Unable to analyze the PDF file.
%This is a known problem with pdfLaTeX conversion filter. The file cannot be opened with acrobat reader
%Please use one of the alternatives below to circumvent this error by uncommenting one or the other
%\pdfobjcompresslevel=0
%\pdfminorversion=4

% See the \addtolength command later in the file to balance the column lengths
% on the last page of the document

% The following packages can be found on http:\\www.ctan.org
%\usepackage{graphics} % for pdf, bitmapped graphics files
%\usepackage{epsfig} % for postscript graphics files
%\usepackage{mathptmx} % assumes new font selection scheme installed
%\usepackage{times} % assumes new font selection scheme installed
%\usepackage{amsmath} % assumes amsmath package installed
%\usepackage{amssymb}  % assumes amsmath package installed

\usepackage{amsmath,amssymb,bm,bbm,mathrsfs,amscd, mathtools}
\usepackage{color}
\usepackage{dsfont}
\usepackage{graphicx}
\usepackage{epsfig}
\usepackage{subcaption}
\usepackage{balance}
\usepackage{algorithm}
\usepackage[noend]{algpseudocode}
\usepackage{tikz}
\usepackage{cite}
\usepackage{booktabs,tabularx}
\usepackage{pifont}

%%%%%%%%%%%%%%%%%%%%%%%%%%%%%%%%%%%%%%%%%%%%%%%%%%%%%%%%%%%%%%%%%%%%%%%%
% Set the path for figures
\graphicspath{{figures/}}
%%%%%%%%%%%%%%%%%%%%%%%%%%%%%%%%%%%%%%%%%%%%%%%%%%%%%%%%%%%%%%%%%%%%%%%%

\newtheorem{theorem}{Theorem}

\newtheorem{lemma}{Lemma}

%%{\theorembodyfont{\rmfamily}
%}
%\theoremstyle{remark}
\newtheorem{remark}{Remark}
\newtheorem{assumption}{Assumption}

% Barbara's newcommands

\newcommand{\RR}{{\mathbb{R}}}
\newcommand{\NN}{{\mathbb{N}}}
\newcommand{\EE}{{\mathbb{E}}}
\newcommand{\PP}{{\mathbb{P}}}
\newcommand{\JJ}{{\mathbb{J}}}
\newcommand{\FF}{{\mathbb{F}}}

\newcommand{\mc}[1]{\mathcal{#1}}
\newcommand{\norm}[1]{\left\|#1\right\|}
\newcommand{\normsq}[1]{\|#1\|^2}
\newcommand{\EEk}[1]{\EE[#1|\mc F_k]}

\newcommand{\op}{\operatorname}

\newcommand{\bs}{\boldsymbol}
\newcommand{\fineass}{\hfill$\square$}
\newcommand{\cmark}{\ding{51}}%
\newcommand{\xmark}{\ding{55}}%

\title{\LARGE \bf A stochastic generalized Nash equilibrium model\\ for platforms competition in the ride-hail market}

\author{Filippo Fabiani and Barbara Franci% <-this % stops a space
	\thanks{F.~Fabiani is with the Department of Engineering Science, University of Oxford, OX1 3PJ, United Kingdom {\tt \footnotesize (filippo.fabiani@eng.ox.ac.uk)}. B.~Franci is with the Department of Data Science and Knowledge Engineering, Maastricht University, P.O. Box 616, NL\textendash 6200 MD Maastricht, The Netherlands {\tt \footnotesize (b.franci@maastrichtuniversity.nl)}. This work was partially supported through the Government’s modern industrial strategy by Innovate UK, part of UK Research and Innovation, under Project LEO (Ref.~104781).}
%	\thanks{B.~Franci is with the Department of Data Science and Knowledge Engineering, Maastricht University, P.O. Box 616, NL\textendash 6200 MD Maastricht, The Netherlands {\tt \footnotesize (b.franci@maastrichtuniversity.nl)}.}
%	\thanks{This work was partially supported through the Government’s modern industrial strategy by Innovate UK under Project LEO (Ref. 104781).}
}

\begin{document}

\maketitle
\thispagestyle{empty}
\pagestyle{empty}

\begin{abstract}
%	The presence of uncertainties in the ride-hailing market complicates enormously the pricing strategies of on-demand platforms that compete with each other to offer a mobility service in response to the rides demand, while they pursue to maximize their profit.
%	By looking at this problem as a stochastic generalized Nash equilibrium problem (SGNEP), we design a distributed, stochastic Nash equilibrium seeking algorithm with Tikhonov regularization to find an optimal pricing strategy. Remarkably, the proposed algorithm does not require an increasing (possibly infinite) number of samples of the random variable to perform the stochastic approximation, thus making it appealing from a practical perspective. Moreover, we show that the algorithm returns a Nash equilibrium under mere monotonicity assumption and a careful choice of the step size sequence, obtained by exploiting the specific structure of the SGNEP at hand. We finally corroborate our results on a numerical instance of the on-demand ride-hailing market.
	The presence of uncertainties in the ride-hailing market complicates the pricing strategies of on-demand platforms that compete each other to offer a mobility service while striving to maximize their profit. Looking at this problem as a stochastic generalized Nash equilibrium problem (SGNEP), we design a distributed, stochastic equilibrium seeking algorithm with Tikhonov regularization to find an optimal pricing strategy. Remarkably, the proposed iterative scheme does not require an increasing (possibly infinite) number of samples of the random variable to perform the stochastic approximation, thus making it appealing from a practical perspective. Moreover, we show that the algorithm returns a Nash equilibrium under mere monotonicity assumption and a careful choice of the step size sequence, obtained by exploiting the specific structure of the SGNEP at hand. We finally corroborate our results on a numerical instance of the on-demand ride-hailing market.
\end{abstract}

\section{Introduction}
In the last few years, we have been experiencing a dizzying growth of the ride-hailing market \cite{nyc_ridehailing}, where on-demand ride-hailing platforms, such as Uber, Lyft and Didi Chuxing, have to settle up suitable pricing strategies to be attractive on two nearly complementary fronts: costumers and drivers. Each ride-hailing firm, indeed, not only competes for costumers with the other firms and with traditional transportation systems, but also strives to secure an as wide as possible fleet of ``loyal'' drivers so that it can meet possibly growing costumers' demand, which often may not be predicted accurately. In this framework, competition among the platforms can be naturally described through a stochastic generalized Nash equilibrium problem (SGNEP): the firms aim at maximizing their expected valued profit function, trying to satisfy the demand for rides, which is typically uncertain, while sharing the market with the other platforms. 

%Technically speaking, SGNEPs are challenging to address, especially if one aims at finding a solution, i.e., a stochastic Nash equilibrium, in a distributed fashion. The difficulties arise mainly because of two factors: the constraints coupling agents' strategies and the presence of uncertainty. To address the first issue, one can reformulate the problems as a monotone inclusion \cite{yi2019, franci2020fb} obtained by exploiting the Karush-Khun-Tucker conditions of the coupled optimization problems. Concerning the uncertainty instead, the usual approach is to resort to stochastic approximation, i.e., to estimate the expected-valued pseudogradient mapping of the game, unknown because of the uncertainty, by using only some realizations of the random variable \cite{robbins1951,koshal2013}.
Specifically, SGNEPs amount to a collection of mutually coupled stochastic optimization problems, which are challenging to address especially if one aims at finding a solution, i.e., a stochastic generalized Nash equilibrium (SGNE), in a distributed fashion. The difficulties are mainly due to the constraints coupling the agents' strategies, and the presence of uncertainty. The first issue is typically accommodate by reformulating the problems as a monotone inclusion \cite{yi2019, franci2020fb} obtained by exploiting the Karush-Khun-Tucker conditions of the coupled optimization problems. Concerning the uncertainty instead, the usual approach approximates the expected-valued pseudogradient mapping of the game by leveraging available realizations of the uncertainty \cite{robbins1951,koshal2013}.

As a next step, one should design a sequence of instructions alternating distributed computation and communication steps, i.e., an algorithm, with provable convergence guarantees to an equilibrium solution of the SGNEP at hand. Among the numerous algorithms for classic stochastic optimization \cite{iusem2017,bot2021}, only few of them are amenable to solve SGNEPs. For this class of problems, indeed, we include the forward-backward (SFB) algorithm \cite{franci2020fb} and related variations, such as the relaxed forward-backward (SRFB) \cite{franci2021} or the projected-reflected-gradient (SPRG) algorithms \cite{franci2021sprg}. However, these procedures are affected by common drawbacks: the monotonicity assumption of the operators involved and the number of samples necessary for the approximation. In fact, both SFB and SPRG algorithms converge in case the pseudogradient mapping is strongly monotone or cocoercive, which correspond to rather strong assumptions. Moreover, FB-based methods take as an approximation the average over an increasing, possibly infinite, number of samples of the uncertainty, which is impractical or even unrealistic. These reasons motive us to modify the traditional FB algorithm with a Tikhonov (Tik) regularization method \cite[Ch.~12]{facchinei2007}, tailored for SGNEPs. Introducing a regularization sequence is indeed a well-known technique to weaken strong assumptions \cite{koshal2013, kannan2012}, since it allows one to obtain, e.g., a strongly monotone operator starting from a merely monotone one \cite{facchinei2007}. We summarize all these considerations in Table~\ref{table_algo}.

\begin{table}[b]
\centering
\begin{tabular}{p{2cm}p{.6cm}ccc}
\toprule
        & Tik  & SpFB \cite{franci2020fb}& SPRG \cite{franci2021sprg} &  SRFB  \cite{franci2021}\\ 
        \midrule
\textsc{Monotonicity}    &   \cmark    &  \xmark  &  \xmark    &  \cmark  \\ 
\# \textsc{Sample(s)}   & 1   & $N_k$  & $N_k$ & $N_k$    \\ 
\textsc{Step Size }  & $\alpha_k$   & $\alpha$  & $\alpha$  & $\alpha$   \\ 
%\textsc{TV step} &   \cmark    &  \xmark  &  \xmark    &  \xmark  \\ 
\bottomrule
\end{tabular}
\caption{Existing FB-based algorithms for SGNEPs which converge with monotonicity (\cmark) or stronger assumptions (\xmark). The quantity $N_k$ indicates an increasing (possibly infinite) number of samples, while %the last line refers to the time-varying (TV) step size sequence.
$\alpha_k$ indicates a the time-varying step size sequence, as opposed to $\alpha$. 
}\label{table_algo}
\end{table}

%In \cite{koshal2013}, a distributed version of this algorithm, where each agent solves its own regularized problem, is discussed for SNEPs. However, \cite{koshal2013} does not take into account the presence of coupling constraints. Although the generalization to SGNEPs is possible, the time-varying nature of the step sizes and of the regularization step poses problems to be treated delicately when using operator splitting techniques. For this reason, we focus on this class of problems here, and therefore we can summarize our contributions as follows:
In \cite{koshal2013}, a distributed version of this Tikhonov regularization-based algorithm was introduced for SNEPs, i.e., without coupling constraints. We show here that the generalization to SGNEPs is possible, albeit non-trivial, as the time-varying nature of both the step sizes and regularization step poses technical challenges to be treated carefully when using operator splitting techniques (\S \ref{sec_discussion}). We can hence summarize our contributions as follows:
%\begin{itemize}
%\item We take inspiration from the cognate literature to propose a model for competing platforms in the ride-hailing market under a regulated pricing scenario and uncertainties, and we cast it as a SGNEP; 
%\item We propose a distributed Tikhonov algorithm tailored for SGNEPs. To the best of our knowledge, this represents the first algorithm for this class of problems that do not require an increasing number of realizations of the random variable to perform the stochastic approximation, thus circumventing a thorny issue in equilibrium seeking algorithm design for SGNEPs;
%\item We show that the algorithm converges to a Nash equilibrium when the pseudogradient mapping is monotone, i.e., one of the weakest assumptions to establish convergence \cite{facchinei2010}, and with a vanishing step size sequence that is carefully taken into account by exploiting the specific structure of the problem at hand.
%\end{itemize}
%The performance of the resulting Tikhonov regularization-based distributed algorithm are tested on a numerical instance of the proposed ridehailing competition market model.
% \tcb{Barbara: from my own personal experience, Tikhonov should be faster than other available monotone algorithms - if so, we can add something in the intro about this}
%In this paper, we propose an extension of these results for SGNEPs. One of our main contributions is to propose the first stochastic optimization algorithm for SGNEPs which require only one sample for the approximation of the pseudogradient mapping. 
\begin{itemize}
\item Inspired by the cognate literature, we propose a noncooperative model for on-demand competing ride-hailing platforms under a regulated pricing scenario and uncertainties, and we recast it as a SGNEP (\S \ref{sec:app}, \ref{sec_GNEPs}); 
\item We propose a distributed Tikhonov regularization-based algorithm that leverages a finite number of samples of the uncertainty to perform the stochastic approximation, thus circumventing a crucial issue in equilibrium seeking algorithm design for SGNEPs (\S \ref{sec_algo_sgnep});
\item We show that the algorithm converges to a SGNE i) under mere monotonicity of the pseudogradient mapping, one of the weakest assumptions to establish convergence \cite{facchinei2010}, and ii) with a careful choice of the step size sequence that exploits the structure of the problem.
\end{itemize}
In conclusion, the performance of the designed Tikhonov-like algorithm is tested on a numerical instance of the proposed ride-hailing competition market model (\S \ref{sec:num_sim}).

\subsection{Notation and Preliminaries}
\paragraph{Notation}
$\NN$ indicates the set of natural numbers and
$\RR$ ($\bar\RR=\RR\cup\{\infty\}$) is the set of (extended) real numbers.
$\langle\cdot,\cdot\rangle:\RR^n\times\RR^n\to\RR$ denotes the standard inner product and $\norm{\cdot}$ is the associated Euclidean norm.
Given a vector $x\in\RR^n$, $x_{\textrm{min}}=\textrm{min}_{i=1,\dots,n}x_i$. 
We indicate that a matrix $A$ is positive definite, i.e., $x^\top Ax>0$, with $A\succ0$. 
Given a symmetric $W\succ0$, the $W$-induced inner product is $\langle x, y\rangle_{W}=\langle W x, y\rangle$ and the associated norm is defined as $\norm{x}_{W}=\sqrt{\langle W x, x\rangle}$. $\op{Id}$ is the identity operator. 
$\iota_{\mc X}$ is the indicator function of the set $\mc X$, that is, $\iota_{\mc X}(x)=0$ if $x\in \mc X$ and $\iota_{\mc X}(x)=\infty$ otherwise. The set-valued mapping $\mathrm{N}_{\mc X} : \RR^{n} \to \RR^{n}$ denotes the normal cone operator of the set $\mc X$, i.e., $\mathrm{N}_{\mc X}(x)=\emptyset$ if $x \notin \mc X$, $\mathrm{N}_{\mc X}(x)=\left\{v \in \RR^{n} | \sup _{z \in \mc X} \langle v,z-x\rangle \leq 0\right\}$ otherwise.

\paragraph{Operator theory}
Let $\op{gra}(F)=\{(x,u):u\in F(x)\}$ be the graph of $F : \mc X \subseteq \RR^{n} \to \RR^{n}$. Then, F is said to be: monotone on $\mc X$ if
$\langle F(x)-F(y),x-y\rangle \geq 0, \text{ for all } x, y \in \mc X;$ $\mu$-strongly monotone on $\mc X$ if there exists a constant $\mu>0$ such that
$\langle F(x)-F(y),x-y\rangle \geq \mu\|x-y\|^{2}, \text{ for all } x, y \in \mc X;$ maximally monotone if there exists no monotone operator 
$G : \mc X \to \RR^n$ such that $\operatorname{gra} G$ properly contains $\operatorname{gra} F$: % i.e.,
%$(x, u) \in \operatorname{gra} F$  $\Leftrightarrow$ for all $(y, v) \in \operatorname{gra} F$ it holds $ \langle x-y | u-v\rangle \geq 0$; 
$\ell$-Lipschitz continuous with constant $\ell>0$ if for all $x, y\in \mc X$ 
$\|F(x)-F(y)\| \leq \ell\|x-y\|$.
The projection operator onto $\mc C$ is the operator defined as $\op{proj}_{\mc C}(x)={\op{argmin}_{z \in\mc  C}}\normsq{z-x}.$ 
%Given $A : \RR^n \rightrightarrows \RR^n$, the resolvent of $A$ is the operator defined as
%$J_{A}=(\operatorname{Id}+A)^{-1}$
%where $\op{Id}$ is the identity function. 

\paragraph{Graph theory}

The weighted adjacency matrix associated to $\mc G^\lambda$ is denoted with $W=[w_{ij}]_{i,j\in\mc I}\in\RR^{N\times N}$, where $w_{ij}>0$ if agents $i$ and $j$ can communicate with each other and $w_{ij}=0$ otherwise. Then, letting $D=\op{diag}\{d_1,\dots,d_N\}$ where $d_{i,h}=\sum_{j=1}^Nw_{ij}$ is the degree of agent $i$, the associated Laplacian is given by $L=D-W\in\RR^{N\times N}$. It follows from Assumption \ref{ass_graph} that $L=L^\top$.

\paragraph{Stochastic setting}
$\EE_\xi$ represent the mathematical expectation with respect to the distribution of the random variable $\xi(\omega)$, in the probability space $(\Xi, \mc F, \PP)$. We avoid expressing the dependency on $\omega$ when clear from the context and often simply write $\EE$. We use i.i.d. to indicate independent identically distributed random variables.

\section{On-demand competing ride-hailing firms}\label{sec:app}
We take inspiration from \cite{zhong2022demand,he2020off,bimpikis2019spatial} to examine how $N \in \NN$ on-demand competing ride-hailing platforms (e.g., Uber, Didi Chuxing, Lyft, Juno and Via) design their pricing strategies under a regulated pricing scenario and the presence of uncertainties. Specifically, given a continuum of potential riders of mass $C_h > 0$ for each area of interest $h \in \mc{H} \subset \NN$ (e.g., suburbs, city centres, airports), each firm $i \in \mc{I} \coloneqq \{1, \ldots, N\}$ aims at maximizing its profit by setting i) a price $p_{i,h} \geq 0$ for the on-demand ride-hailing service to attract as many costumers as possible in the $h$-th area, with
\begin{equation}\label{eq:equity_price}
	{\frac{1}{N} \sum_{i \in \mc{I}} p_{i,h} \leq \bar{p}_h, \ \forall h \in \mc{H},}
\end{equation}
capping the averaged maximum price allowed $\bar{p}_h > 0$, usually imposed by consumers' associations; ii) a wage $w_{i,h} \geq \underline{w} > 0$ for the registered drivers on the $i$-th ride-hailing platform, which is also typically regulated by institutions \cite{beer2017qualitative,zhong2022demand}, to meet the resulting costumers' demand. As one may expect, since profit maximization is a consideration, $p_{i,h}$ and $w_{i,h}$ shall be necessarily interdependent. 

Similar to \cite{mcguire1983industry,he2020off}, we assume that the fraction of customers who choose the $i$-th platform’s service in the $h$-th area, i.e., the demand for the $i$-th firm, is characterized as:
\begin{equation}\label{eq:cust_demand}
		{d_{i,h} = \frac{C_h K_{i,h}}{ \bar{p} \sum_{j \in \mc{I}} K_{j,h}}  \left(\bar{p} - p_{i,h} + \frac{\theta_i}{N-1} \sum_{j \in \mc{I}\setminus\{i\}} p_{j,h} \right),}
\end{equation}
where $K_{i,h} > 0$ denotes the number of registered drivers on the $i$-th ride-hailing platform who prefer to work in the $h$-th area, $\bar{p} > \textrm{max}_{h \in \mc{H}} \ \bar{p}_h$ is a maximum service price, and $\theta_i \in [0,1]$ models the substitutability of the service provided by each firm. When $\theta_i$ is close to $0$, the service of the $i$-th platform is almost independent from the others, while $\theta_i$ close to $1$ means that it is fully substitutable, thus obtaining a perfect competition market. We discuss in details the role played by this parameter in \S \ref{sec:num_sim} with a numerical example.
%Imposing $d_{i,h} \geq 0$ directly leads to the associated constraint
%\begin{equation}\label{eq:cons_1}
%	\forall i \in \mc{I}, h \in \mc{H} : \bar{p} - p_{i,h} + \frac{\theta_i}{N-1} \sum_{j \in \mc{I}\setminus\{i\}} p_{j,h} \geq 0.
%\end{equation}

Note that, however, the demand request in \eqref{eq:cust_demand} does not account for the willingness of the drivers to actually provide a service, which is key to meet the costumers' demand and hence maximize the profit. In fact, any driver provides service in a prescribed area $h \in \mc{H}$ only if its earn is greater than their \emph{opportunity cost}, here denoted by $\delta_{i,h}$, which we let coincide with a random variable. For each firm and area, we assume the wage be given by $w_{i,h} = \beta p_{i,h}$, where the parameter $\beta > 0$ denotes the commission ratio that the platform should pay to its driver, typically regulated by a third party (e.g., governments \cite{zhong2022demand}). Thus, for all $i \in \mc{I}$ and $h \in \mc{H}$, we introduce the \emph{effective demand} as
\begin{equation}\label{eq:cust_demand_eff}
	d^\textrm{e}_{i,h} = d_{i,h} \ \PP[w_{i,h} \geq \delta_{i,h}],
\end{equation}
which coincides with the portion of costumers' demand for which a ride-haling company can actually claim a payment. Roughly speaking, a driver is willing to provide a service only if the wage she gets matches (at least) her expectations. This directly leads us to define the fraction of drivers that give services on the $i$-th platform as:
\begin{equation}\label{eq:driv_service}
	k_{i,h} = K_{i,h} \ \PP[w_{i,h} \geq \delta_{i,h}].
\end{equation}

As a consequence, the effective demand in \eqref{eq:cust_demand_eff} can be equivalently obtained from \eqref{eq:cust_demand} by replacing $K_{i,h}$ with $k_{i,h}$.
However, the costumers' service request in \eqref{eq:cust_demand} may be affected by an additional source of uncertainty. In fact, it is unlikely that the $i$-th firm is aware of the total number of potential drivers, $\sum_{j \in \mc{I}} K_{j,h}$, both for privacy reasons and possible multiple registrations. 
%This means that the portion of riders associated with the demand $d_{i,h}$ is uncertain. 
Thus, we define $\mc C_h(\xi)=\frac{C_h(\xi)}{\sum_{j \in \mc{I}} K_{j,h}}$ as the unknown fraction of passengers in area $h\in\mc H$, which allows us to explicitly account for the uncertain parameter $\xi$. 
In accordance, the demands in \eqref{eq:cust_demand} and \eqref{eq:cust_demand_eff} turn into random variables $d_{i,h}(\xi)$ and $d^\textrm{e}_{i,h}(\xi)$.

As commonly  adopted in the literature \cite{zhong2022demand,bimpikis2019spatial}, we restrict attention to the case where the drivers’ willingness is uniformly distributed in $[\underline{w}, \, \bar{w}_{i,h}]$, so that $\PP[w_{i,h} \geq \delta_{i,h}] = (w_{i,h} - \underline{w})/(\bar{w}_{i,h} - \underline{w})$. This not only allows us to consider one source of uncertainty, but also to make the constraint in \eqref{eq:driv_service} convex. Note that a similar argument can be adopted for any distribution concave in the induced demand $d_{i,h}$ (e.g., exponential, Pareto).  
Then, the stochastic optimization problem associated to each platform amounts to:
\begin{equation}\label{eq:plat_game}
	\forall i \in \mc I: \left\{
	\begin{aligned}
		&\underset{(p_{i,h}, w_{i,h})_{h \in \mc{H}}}{\textrm{max}} &&\EE_\xi[\sum\limits_{h \in \mc{H}} (p_{i,h} d^{\textrm{e}}_{i,h}(\xi) - w_{i,h} k_{i,h})]\\ 
		& \hspace{.6cm}\text { s.t. } && \eqref{eq:equity_price}, \eqref{eq:cust_demand_eff}, \eqref{eq:driv_service}, p_{i,h} \geq 0, \ \forall h \in \mc{H},\\
		&&& w_{i,h} \in [\underline{w}, \, \bar{w}_{i,h}], \ \forall h \in \mc{H}.
	\end{aligned}
	\right.
\end{equation}
The first part of the cost function amounts to the profit of the $i$-th firm to provide a service to the costumers, while the second one considers the costs for providing a service to the drivers. Unlike \cite{mcguire1983industry,he2020off,zhong2022demand}, the cost functions in \eqref{eq:plat_game} accounts for the number of actual drivers who decide to provide a service rather than the whole fleet of registered ones, $K_{i,h}$. 
%This represents a distinct feature of the proposed model.
After replacing the equality constraints in \eqref{eq:cust_demand_eff} and \eqref{eq:driv_service}, we obtain a collection of mutually coupled stochastic optimization problems, where the cost functions (cubic in $p_{i,h}$, due to the distribution of $\delta_{i,h}$) are affected by the uncertain fraction of potential riders, i.e., $\mc C_h(\xi)$. 
The proposed model relies on a common assumption in the literature: those riders who do not get assigned to a driver when they seek service from the ride-haling platforms, e.g., because of excess demand for rides, use a different mean of transportation. The need for a common platform handling competition in ride-haling mobility to satisfy the costumers' demand has been indeed recently explored in, e.g., \cite{PANDEY2019269,fabiani2021personalized}.

Throughout the paper we treat this model as a SGNEP, and we propose an algorithm to compute an equilibrium solution, according to the discussion presented in the next section.

\section{Stochastic generalized Nash equilibrium problem}\label{sec_GNEPs}

To compact the notation, we rewrite the problem in \eqref{eq:plat_game} as
\begin{equation}\label{eq_game}
\forall i \in \mc I: \quad\begin{cases}
\textrm{min}_{x_i \in \Omega_i}& \JJ_i\left(x_i, \bs{x}_{-i}\right)\\ 
\text { s.t. } & g(x_i,\bs x_{-i})\leq0.
\end{cases}
\end{equation}
where $x_i=\op{col}((p_{i,h})_{h \in \mc{H}}) \in \RR^{n}$, $n \coloneqq |\mc{H}|$, $\bs x=\op{col}((x_i)_{i\in\mc I}) \in \RR^{nN}$, and $\bs x_{-i}=\op{col}((x_j)_{j\neq i})$. Moreover, we indicate the set of local constraints of firm $i$ as $\Omega_i\in\RR^{n}$ and let $\bs \Omega\coloneqq\prod_{i\in\mc I}\Omega_i$. On the other hand, the set of coupling constraints arising from \eqref{eq:equity_price} in a general form read as
\begin{equation}\label{collective_set}
\bs{\mc{X}}\coloneqq\bs\Omega\cap\{\bs y \in\bs\RR^{nN}\; | \;g(\bs y) \leq {\bf{0}}_{m}\},
\end{equation}
where $g:\RR^{nN}\to\RR^m$. We indicate with $\bs{\mc X}_i(\bs x_{-i})$ the piece of coupling constraints corresponding to agent $i$, which is affected by the decision variables of the other agents $\bs x_{-i}$. We stress that the formulation of the SGNEP in \eqref{eq_game} is standard \cite{yi2019,franci2021,belgioioso2017}, and hence the theory we develop applies to all the SGNEPs satisfying the assumptions introduced next. 

\begin{assumption}[Constraint qualification]\label{ass_constr}
For each $i \in \mc I,$ the set $\Omega_{i}$ is nonempty, closed and convex. The set $\bs{\mc{X}}$ satisfies Slater's constraint qualification. 
\fineass
\end{assumption}
\begin{assumption}[Separable convex coupling constraints] 
The mapping $g$ in \eqref{collective_set} has a separable form, i.e., $g(\boldsymbol{x})\coloneqq\sum_{i=1}^{N} g_{i}(x_{i})$, for some convex differentiable functions $g_{i}:\mathbb{R}^{n} \rightarrow \mathbb{R}^{m}$, $i\in\mc I$ and it is $\ell_{\mathrm{g}}$-Lipschitz continuous. Its gradient $\nabla g$ is bounded, i.e., $\op{sup}_{\bs x\in\bs{\mc X}}\|\nabla g(\boldsymbol{x})\| \leq B_{\nabla \mathrm{g}}$.
\fineass
\end{assumption}

Given the stochastic nature of our collection of problems, we indicate the cost function of each agent $i\in\mc I$ as 
\begin{equation}\label{eq_cost_stoc}
\JJ_i(x_i,\bs{x}_{-i})\coloneqq\EE[J_i(x_i,\bs{x}_{-i},\xi(\omega))],
\end{equation}
for some measurable function $J_i:\mc \RR^{n}\times \RR^d\to \RR$. We assume that $\EE[J_i(\bs{x},\xi)]$ is well defined for all feasible $\bs{x}\in\bs{\mc X}$ \cite{ravat2011}. 
%Moreover, the cost function presents the typical splitting in a smooth part and a nonsmooth part. The latter is indicated with $f_i:\RR^{n_i}\to\bar{\RR}$ and it can represent a local cost or local constraints via the indicator function, i.e. $f_i(x_i)=\iota_{\Omega_i}(x_i)$.

\begin{assumption}[Cost function convexity]\label{ass_J}
%For each $i\in\mc I$, the function $f_i$ in \eqref{eq_cost_stoc} is lower semicontinuous and convex and $\op{dom}(f_i)=\Omega_i$. 
For each $i \in \mc I$ and $\boldsymbol{x}_{-i} \in \bs{\mc{X}}_{-i}$ the function $\JJ_{i}(\cdot, \boldsymbol{x}_{-i})$ is convex and continuously differentiable.
\fineass\end{assumption}

The goal of the firms is hence to solve \eqref{eq_game} to find a SGNE, i.e., a strategy profile where no agent can decrease its cost function by unilaterally deviating from its decision. Formally, a SGNE is a collective vector $\bs x^*\in\bs{\mc X}$ such that for all $i \in \mc I$
$$\JJ_i(x_i^{*}, \boldsymbol x_{-i}^{*}) \leq \inf \{\JJ_i(y, \boldsymbol x_{-i}^{*})\; | \; y \in \mc{X}_i(\bs x^\ast_{-i})\}.$$

Existence of a SGNE for the game in \eqref{eq_game} is guaranteed under suitable assumptions \cite[\S 3.1]{ravat2011}, though uniqueness does not hold in general \cite[\S 3.2]{ravat2011}. 
Within all possible Nash equilibria, we focus on those coinciding with the solutions of a suitable stochastic variational inequality (SVI) \cite{koshal2013}. 
Thus, we introduce the pseudogradient mapping of the game as
\begin{equation}\label{eq_grad}
\FF(\bs{x})\coloneqq\op{col}\left((\EE[\nabla_{x_{i}} J_{i}(x_{i}, \bs{x}_{-i})])_{i \in \mc{I}}\right).
\end{equation} 
Flipping the expected value and the gradient follows from differentiability of $J_i(\cdot,\bs{x}_{-i})$ (Assumption \ref{ass_J}) \cite[Th.~7.44]{shapiro2021}.
Then, the SVI associated to the SGNEP in \eqref{eq_game} reads as
\begin{equation}\label{eq_SVI}
\langle \FF(\bs x^*),\bs x-\bs x^*\rangle%+\sum_{i\in\mc I}\left\{f_i(x_i)-f_i(x^*_i)\right\}
\geq 0,\text { for all } \bs x \in \bs{\mc X}.
\end{equation}
If Assumptions \ref{ass_constr}--\ref{ass_J} hold, any solution to $\op{SVI}(\bs{\mc X} , \FF)$ in \eqref{eq_SVI} is a SGNE of the game in (\ref{eq_game}), while the converse does not necessarily hold. A game may have a Nash equilibrium while the associated (S)VI may have no solution \cite[Prop.~12.7]{palomar2010}. 
\begin{assumption}[Existence of a variational equilibrium]\label{ass_sol}
The SVI in \eqref{eq_SVI} has at least one solution.
\fineass
\end{assumption}
%\begin{remark}\label{remark_set}
%Assumption \ref{ass_sol} is satisfied if the sets $\Omega_i$, $i\in\mc I$, are compact \cite[Corollary 2.2.5]{facchinei2007}.\fineass
%\end{remark}
We call variational equilibria (v-SGNE) the SGNE that are also solution to $\op{SVI}(\bs{\mc X} , \FF)$ in (\ref{eq_SVI}) with $\FF$ in (\ref{eq_grad}) and $\bs{\mc X}$ in (\ref{collective_set}). 
These equilibria can be characterized in terms of the Karush--Kuhn--Tucker (KKT) conditions of the coupled optimization problems in (\ref{eq_game}), i.e.,
%Let us define the Lagrangian function, for each $i\in\mc I$, with
%%$$\mc L_i\left(\bs{x}, \lambda_i\right) \coloneqq\JJ_i\left(x_i, \bs{x}_{-i}\right)+f_i\left(x_i\right)+\lambda_i^{\top}g(x_i^*,\bs x_{-i}^*),$$
%where $\lambda_i \in \RR_{ \geq 0}^{m}$ is the Lagrangian dual variable associated with the coupling constraints. 
a $\bs x^{*}$ is a v-SGNE if and only if the following inclusion is satisfied for $\lambda\in\RR^m_{\geq0}$ \cite[Th.~4.6]{facchinei2010}:
\begin{equation}\label{eq_T}
0\in\mc T(\bs{x},\bs\lambda)\coloneqq\left[\begin{array}{c}
\FF(\bs{x})+\op{N}_{\bs \Omega}(\bs x)+\nabla g(\bs x)^{\top}\bs \lambda \\ 
\mathrm{N}_{\RR_{ \geq 0}^{m}}(\bs\lambda)-g(\bs x)
\end{array}\right],
\end{equation}
where $\mc T:\bs{\mc{X}}\times \RR^m_{\geq 0}\rightrightarrows \RR^{nN}\times\RR^m$ is a set-valued mapping. 
According to \cite[Th.~3.1]{facchinei2007vi}, \cite[Th~3.1]{auslender2000}, the v-SGNE are those equilibria such that the shared constraints have the same dual variable for all the agents, i.e., $\lambda_i=\lambda$ for all $i\in\mc I$, and solve the $\op{SVI}(\bs{\mc X},\FF)$ in \eqref{eq_SVI}.
Then, the v-SGNE of the game in \eqref{eq_game} correspond to the zeros of $\mc T$, which can be split as the sum of two operators, $\mc T=\mc A+\mc B$, where
\begin{equation}\label{eq_splitting}
\begin{aligned}
\mc{A} &:\left[\begin{array}{l}
\bs{x} \\
\lambda
\end{array}\right] \mapsto\left[\begin{array}{c}
\FF(\bs{x}) \\ 
0
\end{array}\right]+\left[\begin{array}{c}
\nabla g(\bs x)^\top\bs \lambda\\
-g(\bs x)
\end{array}\right],\\
%\left[\begin{array}{l}
%\bs{x} \\ 
%\lambda
%\end{array}\right]\\
\mc{B} &:\left[\begin{array}{l}
\bs{x} \\ 
\lambda
\end{array}\right] \mapsto\left[\begin{array}{c}
\op N_{\bs \Omega}(\bs x) \\ 
\op N_{\RR_{ \geq 0}^{m}}(\bs \lambda)
\end{array}\right].
\end{aligned}
\end{equation}

\section{Distributed stochastic Tikhonov relaxation}\label{sec_algo_sgnep}

\begin{algorithm}[t]
	\caption{Distributed stochastic Tikhonov relaxation}\label{algo_i}
	\smallskip
	\textbf{Initialization}: $x_i^0 \in \Omega_i, \lambda_i^0 \in \RR_{\geq0}^{m},$ and $z_i^0 \in \RR^{m} .$\\
	\smallskip
	\textbf{Iteration} $k$: Agent $i$ receives $x_{j}^k$ for all $j \in \mathcal{N}_{i}^{J}$ and $z_j^k, \lambda_{j}^k$ for $j \in \mathcal{N}_{i}^{\lambda}$, then updates:
	\smallskip
	\begin{equation*}\label{eq_prox}
		\begin{aligned}
			x_i^{k+1}&=\op{proj}_{\Omega_i}\{ x_i^k-\alpha^k_{i}\gamma_i(\hat F_{i}(x_i^k, \boldsymbol{x}_{-i}^k,\xi_i^k) +\nabla g_i(x_i)^\top \lambda_i^k \\
			&\hspace{6.5cm} +\epsilon^k_ix^k_i)\}\\
			z_i^{k+1}&= z_i^k-\alpha_i^k\nu_{i}( \textstyle{\sum_{j \in \mathcal{N}_{i}^{\lambda}}} w_{ij}(\lambda_i^k-\lambda_{j}^k)+\epsilon^k_iz_i^k)\\
			\lambda_i^{k+1}&=\op{proj}_{\RR^m_{\geq 0}}\{ \lambda_i^k+\alpha_k^i\tau_{i}(g_i(x_i^k) -\epsilon^k_i\lambda_k^i)\\
			&\hspace{1.5cm}+\alpha_k^i\tau_i
			\textstyle{\sum_{j \in \mathcal{N}_{i}^{\lambda}}} w_{ij}[(z_{i}^{k}-z_j^k)-
			(\lambda_i^k-\lambda_j^k)]\}
		\end{aligned}
	\end{equation*}
\end{algorithm}

%In this section, we describe the details that lead to the sequence of instructions in Algorithm \ref{algo_i}. For the local decision variable $x_i$, the projection guarantees that the local constraints are always satisfied while the coupling constraints are enforced asymptotically through the dual variable $\lambda_i$, which should be nonnegative (this explains the projection onto $\RR^m_{\geq0}$). We note that to update the primal variable $x_i$, we use an approximation $\hat F$ of the pseudogradient mapping $\FF$. Finally, the variable $z_i$ represents an auxiliary variable to force consensus on the dual variables.
We now discuss in details the sequence of instructions summarized in Algorithm \ref{algo_i}. For the local decision variable $x_i$ the projection onto $\Omega_i$ guarantees that the local constraints are always satisfied, while the coupling constraints are enforced asymptotically through the (nonnegative, due to the projection onto $\RR^m_{\geq0}$) dual variable $\lambda_i$.
% (thus explaining the projection onto $\RR^m_{\geq0}$). 
%We note that to update the primal variable $x_i$, we use an approximation $\hat F$ of the pseudogradient mapping $\FF$. 
%Finally, 
The auxiliary variable $z_i$, instead, forces consensus on the dual variables \cite{yi2019,facchinei2007vi}.

We assume that the decision-maker $i$ knows its feasible set $\Omega_i$, and its part of the coupling constraints $\bs{\mc X}_i(\bs x_{-i})$.
%We also suppose that the agents have access to a pool of samples of the random variable and are able to compute, given the actions of the other players $\bs x_{-i}$, the pseudogradient $\FF$ of their own cost functions (or an approximation $\hat F$). 
The set of agents $j$ whose decision variables affect the cost function of agent $i$, are denoted by $\mc N_i^J$. Specifically, some $j \in \mc{I}$ belongs to $\mc N_i^J$ if $J_i(x_i,\bs x_{-i})$ explicitly depends on $x_j$.
%Under these premises, Algorithm \ref{algo_i} is distributed in the sense that each agent knows its own problem data and variables and communicates with the other agents only to access the information to compute $\FF$ or its approximation $\hat F$.

%Let us also introduce the graph $\mc G^\lambda=(\mc I,\mc E^\lambda)$ through which a local copy of the dual variable is shared. Therefore, along with the dual variable, agents share through $\mc G^\lambda$ a copy of an auxiliary variable $z_i\in\RR^m$ whose role is to force consensus \cite{yi2019,facchinei2007vi}. The set of edges $\mc E^\lambda$ of the multiplier graph $\mc G^\lambda$, is given by: $(i,j)\in\mc E^\lambda$ if player $j$ share its $\{\lambda_j,z_j\}$ with player $i$. For all $i\in\mc I$, the neighboring agents in $\mc G^\lambda$ form the set $\mc N^\lambda_i=\{j\in\mc I:(i,j)\in\mc E^\lambda\}$. Under these premises, Algorithm \ref{algo_i} is distributed in the sense that each agent knows its own problem data and communicates with the other agents through the graph. To guarantee that consensus can be reached, we assume the following.
Let us then introduce the graph $\mc G^\lambda=(\mc I,\mc E^\lambda)$ through which a local copy of the dual variable is shared, along with of the auxiliary one, $z_i\in\RR^m$. The set of edges $\mc E^\lambda$ of the multiplier graph $\mc G^\lambda$, is given by: $(i,j)\in\mc E^\lambda$ if player $j$ share its $\{\lambda_j,z_j\}$ with player $i$. For all $i\in\mc I$, the neighboring agents in $\mc G^\lambda$ form the set $\mc N^\lambda_i=\{j\in\mc I:(i,j)\in\mc E^\lambda\}$. Under these premises, Algorithm \ref{algo_i} is distributed in the sense that each agent knows its own problem data and communicates with the other agents through $\mc G^\lambda$. To guarantee that consensus can be reached, we make the following assumption.
\begin{assumption}[Graph connectivity]\label{ass_graph}
The multiplier graph $\mc G^\lambda$ is undirected and connected.
\fineass\end{assumption}

By making use of $\mc G^\lambda$, we hence note that consensus on the dual variable can be enforced via equality constraint ${\bf{ L}}\bs\lambda=0$, where ${\bf{L}}=L\otimes \op{Id}_m\in\RR^{Nm\times Nm}$, $L$ being the Laplacian of the graph, and $\bs \lambda=\op{col}(\lambda_1,\dots,\lambda_N)\in\RR^{Nm}$. Following \cite{yi2019}, the operators $\mc A$ and $\mc B$ in \eqref{eq_splitting} can thus be extended to
\begin{equation}\label{eq_expanded}
\begin{aligned}
\bar{\mc{A}} &:\left[\begin{array}{l}
\bs{x} \\
\bs z\\
\bs \lambda
\end{array}\right] \mapsto\left[\begin{array}{c}
\FF(\bs{x}) \\ 
0\\
{\bf{L}}\bs\lambda
\end{array}\right]+
\left[\begin{array}{ccc}
\nabla G(\bs x)^\top\bs\lambda\\
{\bf{L}} \bs \lambda\\
-G(\bs x)-{\bf{L}} \bs z
\end{array}\right],\\
%\left[\begin{array}{l}
%\bs{x} \\ 
%\bs z\\
%\bs \lambda
%\end{array}\right]\\
\bar{\mc{B}} &:\left[\begin{array}{l}
\bs{x} \\ 
\bs z\\
\bs \lambda
\end{array}\right] \mapsto\left[\begin{array}{c}
\op N_{\bs \Omega}(\bs x) \\ 
{\bf{0}}\\
\mathrm{N}_{\RR_{ \geq 0}^{m}}(\bs \lambda)
\end{array}\right],
\end{aligned}
\end{equation}
where $\bs z=\op{col}(z_1,\dots,z_N)\in\RR^{Nm}$, $G(\bs x)=\op{diag}((g_i(x_i))_{i\in\mc I})$ and $\nabla G(\bs x)=\op{diag}((\nabla_{x_i}g_i(x_i))_{i\in\mc I})$.
From now on, we indicate the local decision variable of each agent taking part to the SGNEP as $\bs u=\op{col}(\bs x,\bs z,\bs \lambda)$. 

\begin{remark}\label{remark_zeros}
It can be proven that if Assumptions \ref{ass_constr}--\ref{ass_graph} are satisfied, then the following hold \cite{yi2019, franci2021,franci2020fb}:
\begin{enumerate}
\item[(i)] Given any $\bs u^* \in \op{zer}(\bar{\mc A}+\bar{\mc B})$, $\bs{x}^{*}$ is a v-SGNE of game in (\ref{eq_game}), i.e., $\bs{x}^*$ solves the $\op{SVI}(\bs{\mc X} , \FF)$ in (\ref{eq_SVI}), $\bs{\lambda}^{*}=\mathbf{1}_{N} \otimes \lambda^{*},$ and $(\bs{x}^*,\lambda^{*})$ satisfy the KKT condition in (\ref{eq_T}), i.e., $\op{col}(\bs{x}^*, \lambda^{*}) \in \op{zer}(\mc A+\mc B)$.
\item[(ii)] $\op{zer}(\mc A+\mc B) \neq \emptyset$ and $\op{zer}(\bar{\mc A}+\bar{\mc B}) \neq \emptyset$.\fineass
\end{enumerate}
\end{remark}

%\begin{algorithm}[t!]
%	\caption{Distributed stochastic Tikhonov relaxation}\label{algo_i}
%	\smallskip
%	\textbf{Initialization}: $x_i^0 \in \Omega_i, \lambda_i^0 \in \RR_{\geq0}^{m},$ and $z_i^0 \in \RR^{m} .$\\
%	\smallskip
%	\textbf{Iteration} $k$: Agent $i$ receives $x_{j}^k$ for all $j \in \mathcal{N}_{i}^{J}$ and $z_j^k, \lambda_{j}^k$ for $j \in \mathcal{N}_{i}^{\lambda}$, then updates:
%	\smallskip
%	\begin{equation*}\label{eq_prox}
%		\begin{aligned}
%			x_i^{k+1}&=\op{proj}_{\Omega_i}\{ x_i^k-\alpha^k_{i}\gamma_i(\hat F_{i}(x_i^k, \boldsymbol{x}_{-i}^k,\xi_i^k) +\nabla g_i(x_i)^\top \lambda_i^k \\
%			&\hspace{6.5cm} +\epsilon^k_ix^k_i)\}\\
%			z_i^{k+1}&= z_i^k-\alpha_i^k\nu_{i}( \textstyle{\sum_{j \in \mathcal{N}_{i}^{\lambda}}} w_{ij}(\lambda_i^k-\lambda_{j}^k)+\epsilon^k_iz_i^k)\\
%			\lambda_i^{k+1}&=\op{proj}_{\RR^m_{\geq 0}}\{ \lambda_i^k+\alpha_k^i\tau_{i}(g_i(x_i^k) -\epsilon^k_i\lambda_k^i)\\
%			&\hspace{1.5cm}+\alpha_k^i\tau_i
%			\textstyle{\sum_{j \in \mathcal{N}_{i}^{\lambda}}} w_{ij}[(z_{i}^{k}-z_j^k)-
%			(\lambda_i^k-\lambda_j^k)]\}
%		\end{aligned}
%	\end{equation*}
%\end{algorithm}

Note that the update of the primal variable $x_i$ in Algorithm~\ref{algo_i} makes use of an approximation $\hat F$ of the pseudogradient mapping $\FF$, since the distribution of the random variable is unknown and hence the expected value mapping can be hard to compute. 
%For this reason, instead of the pseudogradient mapping in \eqref{eq_grad}, we use an approximation 
Thus, at each iteration $k$ we let
\begin{equation}\label{eq_F_SA}
\begin{aligned}
\hat F(\bs x^k&,\bs \xi^k)=\op{col}((\hat F_i(\bs x^k,\bs \xi^k))_{i\in\mc I})\\
&=\op{col}(\nabla_{x_1}J_1(\bs x^k,\xi^k_1),\dots,\nabla_{x_N}J_N(\bs x^k,\xi^k_N)),
\end{aligned}
\end{equation}
where $\bs\xi^{k} =\op{col}(\xi_1^{k},\dots,\xi_{N}^k)\in\RR^N$ is a collection of i.i.d. random variables drawn from $\PP$. Essentially, we replace the expected-valued pseudogradient mapping with the realization of \emph{one sample} of the random variable. It follows that 
$$\hat F(\bs x^k,\bs \xi^k)=\FF(\bs x^k)-\delta^k(\bs x^k,\bs \xi^k),$$
where $\delta^k$ is typically called stochastic or approximation error. 
%In addition, we replace the operator $\bar{\mc A}$ in \eqref{eq_expanded} with 
By exploiting the approximation $\hat F$ in \eqref{eq_F_SA} of the expected value mapping $\FF$, we replace the operator $\bar{\mc A}$ in \eqref{eq_expanded} with 
\begin{equation}\label{eq_A_hat}
\hat{\mc A}:\left[\begin{array}{c}\hspace{-.1cm}
(\bs{x},\xi)\hspace{-.1cm} \\ 
\bs z\\
\bs \lambda
\end{array}\right] \hspace{-.1cm}\mapsto\hspace{-.1cm}\left[\begin{array}{c}
\hat F(\bs{x},\xi) \\ 
0\\
\hspace{-.1cm}\bf L\bs\lambda\hspace{-.1cm}
\end{array}\right]+\left[\begin{array}{ccc}
\nabla G(\bs x)^\top\bs\lambda\\
{\bf{L}} \bs z\\
-G(\bs x)-{\bf{L}} \bs z
\end{array}\right].
%\left[\begin{array}{l}
%\bs{x} \\ 
%\bs z\\
%\bs \lambda
%\end{array}\right].
\end{equation}
%where $\hat F$ is the approximation \eqref{eq_F_SA} of the expected value mapping $\FF$, given one realization of the random vector $\xi$. 

Algorithm \ref{algo_i} can now be rewritten in compact form as
\begin{equation}\label{algo}
%\begin{aligned} 
\bs u^{k+1} =(\op{Id}+\Phi_k^{-1}\bar{\mc B})^{-1}(\bs u^{k}-\Phi_k^{-1}(\hat{\mc A}\bs u^k+\varepsilon^k\bs u^k)),
%\end{aligned}
\end{equation}
where $\Phi_k\succ0$ contains the inverse of step size sequences 
\begin{equation}\label{eq_phi}
\Phi_k=\alpha_k^{-1}\Phi=\alpha_k^{-1}\op{diag}(\gamma^{-1},\nu^{-1},\tau^{-1}),
\end{equation}
with $\gamma^{-1}$, $\nu^{-1}$, $\tau^{-1}$ being diagonal matrices, and $\varepsilon^k=\op{diag}(\epsilon_j^k)\in\RR^{T\times T}$, $T=nN+2Nm$, contains the regularization steps. This last term is what typically characterize the Tikhonov regularization scheme \cite{facchinei2007,koshal2013}.

\subsection{Convergence analysis}\label{sec_conv_vr}

We now study the convergence properties of Algorithm~\ref{algo_i}.
First, to ensure that $\bar{\mc A}$ and $\bar{\mc B}$ have the properties that we use for the analysis, we make the following assumption, which allows us to state the result immediately below.
\begin{assumption}[Monotonicity]\label{ass_mono}
$\FF$ in \eqref{eq_grad} is monotone and $\ell_\FF$-Lipschitz continuous for some $\ell_\FF>0$.
\fineass
\end{assumption}

\begin{lemma}\cite[Lemma~2 and 4]{franci2021}\label{lemma_op}
Let Assumptions \ref{ass_graph} and \ref{ass_mono} hold true, and let $\Phi\succ 0$. Then, the operators $\bar{\mc A}$ and $\bar{\mc B}$ in \eqref{eq_expanded} have the following properties.
\begin{enumerate}
\item $\bar{\mc A}$ is monotone and $\ell_{\bar{\mc A}}$-Lipschitz continuous.
\item $\bar{\mc B}$ is maximally monotone.
\item $\Phi^{-1}\bar{\mc A}$ is monotone and $\ell_{\Phi}$-Lipschitz continuous.
\item $\Phi^{-1}\bar{\mc B}$ is maximally monotone.\fineass
\end{enumerate}
\end{lemma}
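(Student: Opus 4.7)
I would dispatch the four items by exploiting the block structure of $\bar{\mc A}$ and $\bar{\mc B}$ in \eqref{eq_expanded}, treating the pseudogradient part separately from the coupling/Laplacian part.

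For item (1), I split $\bar{\mc A}=\bar{\mc A}_1+\bar{\mc A}_2$, where $\bar{\mc A}_1:\bs u\mapsto \op{col}(\FF(\bs x),0,0)$ isolates the pseudogradient and $\bar{\mc A}_2$ gathers the terms involving $\nabla G$, $G$, and $\mathbf L$. Monotonicity of $\bar{\mc A}_1$ is immediate from Assumption~\ref{ass_mono}. For $\bar{\mc A}_2$, evaluated at $\bs u=(\bs x,\bs z,\bs\lambda)$ and $\bs u'=(\bs x',\bs z',\bs\lambda')$, the cross terms $\langle \nabla G(\bs x)^\top\bs\lambda-\nabla G(\bs x')^\top\bs\lambda',\bs x-\bs x'\rangle$ and $-\langle G(\bs x)-G(\bs x'),\bs\lambda-\bs\lambda'\rangle$ collapse through the separable affine structure of $g$ granted by Assumption~\ref{ass_constr} (so that $\nabla G$ is constant and the two contributions are negatives of one another, i.e., form a skew-symmetric pairing). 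The remaining Laplacian contributions simplify, using $\mathbf L=\mathbf L^\top\succeq 0$ from Assumption~\ref{ass_graph}, to $\|\mathbf L^{1/2}((\bs\lambda-\bs\lambda')-(\bs z-\bs z'))\|^2\geq 0$. Lipschitz continuity follows by combining $\ell_{\FF}$ (Assumption~\ref{ass_mono}), the bounds $\ell_g$ and $B_{\nabla g}$ in Assumption~\ref{ass_constr}, and $\|\mathbf L\|$, yielding $\ell_{\bar{\mc A}}\lesssim \ell_{\FF}+\ell_g+B_{\nabla g}+\|\mathbf L\|$.

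For item (2), observe that $\bar{\mc B}$ is the Cartesian product $\op N_{\bs\Omega}\times\{\mathbf 0\}\times \op N_{\RR^{Nm}_{\geq 0}}$. Normal cones of the nonempty closed convex sets $\bs\Omega$ (Assumption~\ref{ass_constr}) and $\RR^{Nm}_{\geq 0}$ are maximally monotone by standard convex analysis, the zero operator is trivially so, and the Cartesian product of maximally monotone operators is maximally monotone, giving the claim.

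For items (3) and (4), the positive definiteness of $\Phi$ in \eqref{eq_phi} lets me re-endow $\RR^T$ with the $\Phi$-induced inner product $\langle\cdot,\cdot\rangle_\Phi$. Then $\langle \Phi^{-1}\bar{\mc A}\bs u-\Phi^{-1}\bar{\mc A}\bs u',\bs u-\bs u'\rangle_\Phi=\langle \bar{\mc A}\bs u-\bar{\mc A}\bs u',\bs u-\bs u'\rangle\geq 0$ transfers monotonicity from item (1), and analogously for (maximal) monotonicity of $\Phi^{-1}\bar{\mc B}$ using item (2). Lipschitz continuity of $\Phi^{-1}\bar{\mc A}$ in the $\Phi$-norm follows from $\|\Phi^{-1}\bar{\mc A}\bs u-\Phi^{-1}\bar{\mc A}\bs u'\|_\Phi\leq \ell_{\bar{\mc A}}\|\Phi^{-1/2}\|\,\|\bs u-\bs u'\|\leq \ell_\Phi\|\bs u-\bs u'\|_\Phi$, with $\ell_\Phi$ proportional to $\ell_{\bar{\mc A}}$ and the extreme eigenvalues of $\Phi$.

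The main obstacle is the monotonicity computation for $\bar{\mc A}_2$: the skew-symmetric cancellation of the $\nabla G^\top\bs\lambda$/$-G(\bs x)$ pair relies on $g$ being affine, and the Laplacian terms must be rearranged carefully into a perfect square by using both the symmetry of $\mathbf L$ and the precise signs appearing in the second and third block rows of \eqref{eq_expanded}.
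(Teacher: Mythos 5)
The paper offers no proof of this lemma at all --- it is imported verbatim by citation from \cite[Lemma~2 and 4]{franci2021} --- so your proposal is being compared against the standard argument in that reference rather than against anything in the text. Your overall architecture matches that standard argument: split $\bar{\mc A}$ into the pseudogradient block plus a skew/Laplacian block, get item (2) from maximal monotonicity of normal cones of nonempty closed convex sets and of Cartesian products, and get items (3)--(4) by re-reading (maximal) monotonicity and Lipschitz continuity in the $\Phi$-induced inner product, which works because $\Phi$ is diagonal and positive definite. Items (2)--(4) are fine as written.

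There is, however, a genuine gap in your treatment of item (1). You ground the cancellation of the pair $\langle \nabla G(\bs x)^\top\bs\lambda-\nabla G(\bs x')^\top\bs\lambda',\bs x-\bs x'\rangle$ and $-\langle G(\bs x)-G(\bs x'),\bs\lambda-\bs\lambda'\rangle$ on $g$ being affine (constant $\nabla G$, exact skew-symmetry), and you even attribute this to Assumption~\ref{ass_constr}. But the relevant hypothesis is Assumption~2, which only requires each $g_i$ to be \emph{convex} and differentiable --- affineness holds in the ride-hailing instance \eqref{eq:equity_price} but is not assumed in the general lemma. For merely convex $g$ the two cross terms do not cancel; instead one shows their sum is nonnegative by applying the gradient inequality $g_i(x_i)-g_i(x_i')\leq \nabla g_i(x_i)(x_i-x_i')$ and its counterpart at $x_i'$, componentwise, and then pairing with $\lambda_i,\lambda_i'\in\RR^m_{\geq 0}$; this is why monotonicity of $\bar{\mc A}$ is only claimed on the domain where $\bs\lambda\in\RR^{Nm}_{\geq 0}$ (enforced by $\bar{\mc B}$). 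The same issue resurfaces in your Lipschitz bound: when $\nabla G$ is not constant, the term $\nabla G(\bs x)^\top\bs\lambda$ is genuinely bilinear, and one needs the boundedness $\|\nabla g\|\leq B_{\nabla g}$ and $\ell_g$ from Assumption~2 together with a restriction to a bounded dual set to obtain a global constant. Separately, a minor computational slip: the Laplacian contribution reduces to $\langle \mathbf L(\bs\lambda-\bs\lambda'),\bs\lambda-\bs\lambda'\rangle\geq 0$ with the $\bs z$-cross terms cancelling exactly by symmetry of $\mathbf L$; it is not the perfect square $\|\mathbf L^{1/2}((\bs\lambda-\bs\lambda')-(\bs z-\bs z'))\|^2$ you wrote, although the sign conclusion is unaffected.
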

%\tcb{Barbara: we can also explicitly say how $\ell_{\mc A}$ depends on $\ell_\FF$ and $B_{\nabla g}$ but it's not relevant}
The Lipschitz constants in Lemma \ref{lemma_op} depends on those of $g_i$, $i\in\mc I$, and $\FF$ (Assumptions \ref{ass_constr} and \ref{ass_mono}, respectively). Their specific expressions, however, are not relevant for our analysis, and therefore we point to \cite{franci2021} for additional details.
\begin{remark}
%Due to the introduction of the regularization term, it follows that $\bar{\mc A}+\varepsilon^k$ is strongly monotone \cite[Th.~ 12.2.3]{facchinei2007}. Hence, for the convergence result, mere monotonicity of the pseudogradient mapping is enough.
%In addition, we also stress that the regularization term shall be added in the operator $\bar{\mc A}$ and not just to force strong monotonicity of the SVI in \eqref{eq_SVI}. In fact, starting from a strongly monotone mapping, the resulting operator $\bar{\mc A}$ can be at most cocoercive \cite[Lemmas 5 and 7]{yi2019}, \cite[Lemmas 2 and 4]{franci2019ecc}.
Introducing the regularization term makes $\bar{\mc A}+\varepsilon^k$ strongly monotone \cite[Th.~12.2.3]{facchinei2007}. Thus, mere monotonicity of the pseudogradient mapping is enough to show convergence.
In addition, note that the regularization term is added to the operator $\bar{\mc A}$ and not to force strong monotonicity of the SVI in \eqref{eq_SVI}. In fact, starting from a strongly monotone mapping, the resulting operator $\bar{\mc A}$ can be at most cocoercive \cite[Lemma~5 and 7]{yi2019}, \cite[Lemma~2 and 4]{franci2019ecc}.
\fineass
\end{remark}

Taking few samples as in \eqref{eq_F_SA} is realistic and computationally tractable, at the price of requiring an additional assumption on the step sizes. Specifically, we indicate next how to choose the parameters in Algorithm~\ref{algo_i} to ensure that they are vanishing to control the approximation error \cite{koshal2013}. %This is key for both the convergence analysis and the rate of convergence.

\begin{assumption}\label{ass_step}
The step size sequence $(\alpha^k)_{k\in\NN}$ and the regularizing sequences $(\epsilon_j^k)_{k\in\NN}$, $j=1,\dots, T$, are such that $\alpha^k=(k+\eta)^{-a}$ and $\epsilon_j^{k}=(k+\zeta_{j})^{-b}$ for $k \geq 0$, where each $\eta$ and $\zeta_{j}$ are selected from a uniform distribution on the intervals $[\underline{\eta}, \bar{\eta}]$ and $[\zeta, \bar{\zeta}]$, respectively, for some $0<\eta<\bar{\eta}$ and $0<\zeta<\bar{\zeta}$ and $a, b \in(0,1)$, $a+b<1$, and $a>b$. 
\fineass
\end{assumption}
%\tcb{Barbara: we can also avoid the uniform distribution and just say $\eta$ and $\zeta_i$ are bounded and positive}

Let us define the filtration $\mc F=\{\mc F_k\}_{k\in\NN}$, that is, a family of $\sigma$-algebras such that $\mathcal{F}_{0} = \sigma\left(X_{0}\right)$ and $\mathcal{F}_{k} = \sigma\left(X_{0}, \xi_{1}, \xi_{2}, \ldots, \xi_{k}\right)$ for all $k \geq 1,$
such that $\mc F_k\subseteq\mc F_{k+1}$ for all $k\in\NN$. In words, $\mc F_k$ contains the information up to iteration $k$.
Since we consider the approximation in \eqref{eq_F_SA}, we let $\Delta^k=\op{col}(\delta^k,0,0)$ be the stochastic error, i.e., $\Delta^k=\bar{\mc A}(\bs u^k)-\hat{\mc A}(\bs u^k,\bs\xi^k)$. Then, an additional assumption is finally needed to regulate its asymptotic behaviour.

\begin{assumption}\label{ass_error}
The step size sequence $(\alpha^k)_{k\in\NN}$, regularization sequence $(\varepsilon^k)_{k\in\NN}$ and the stochastic error $\Delta^k$ satisfy
$\lim _{k \rightarrow \infty}(\alpha^k / \epsilon_{k, \textrm{min} }) \mathbb{E}[\|\Delta^{k}\|_{\Phi}^{2} \mid \mathcal{F}_{k}]=0$ and
$\sum_{k=0}^{\infty} \alpha_{k}^{2} \mathbb{E}[\|\Delta^{k}\|_{\Phi}^{2} \mid \mathcal{F}_{k}]<\infty$ a.s..
\fineass
\end{assumption}

We are now ready to state our main convergence result.

\begin{theorem}\label{theo_sgne}
Let Assumptions \ref{ass_constr} - \ref{ass_error} hold true. Then, the sequence $(\bs x^k)_{k \in \NN}$ generated by Algorithm \ref{algo_i} with $\hat F$ as in \eqref{eq_F_SA} converges a.s. to a v-SGNE of the game in \eqref{eq_game}.  
\fineass
\end{theorem}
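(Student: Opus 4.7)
The strategy is to leverage the Tikhonov regularization to reduce a merely monotone SVI to a sequence of strongly monotone inclusions whose unique zeros form a trajectory $(\bs u^*_k)_{k\in\NN}$, and then to establish both that $\bs u^*_k$ converges to the minimum-norm element $\bar{\bs u}$ of $\op{zer}(\bar{\mc A}+\bar{\mc B})$, and that the iterates of Algorithm~\ref{algo_i} track this trajectory asymptotically, i.e., $\|\bs u^k - \bs u^*_k\|_{\Phi} \to 0$ a.s. Concretely, for each $k$ Lemma~\ref{lemma_op} together with \cite[Thm.~12.2.3]{facchinei2007} implies that $0 \in \bar{\mc A}(\bs u) + \epsilon^k \bs u + \bar{\mc B}(\bs u)$ admits a unique zero $\bs u^*_k$, and that $\bs u^*_k \to \bar{\bs u}$ by the standard Tikhonov-trajectory argument (the set $\op{zer}(\bar{\mc A}+\bar{\mc B})$ is nonempty by Remark~\ref{remark_zeros}(ii)). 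Once tracking is proved, Remark~\ref{remark_zeros}(i) yields that the limiting $\bs x$-block is a v-SGNE.

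The central step is a stochastic recursion for $V_k \coloneqq \|\bs u^k - \bs u^*_k\|_{\Phi}^2$. Writing the iterate as
\begin{equation*}
\bs u^{k+1} = J_{\Phi_k^{-1}\bar{\mc B}}\bigl(\bs u^k - \Phi_k^{-1}(\bar{\mc A}(\bs u^k) + \epsilon^k\bs u^k) + \Phi_k^{-1}\Delta^k\bigr),
\end{equation*}
with $\Delta^k = \bar{\mc A}(\bs u^k) - \hat{\mc A}(\bs u^k,\bs\xi^k)$, and using the analogous fixed-point representation $\bs u^*_k = J_{\Phi_k^{-1}\bar{\mc B}}(\bs u^*_k - \Phi_k^{-1}(\bar{\mc A}(\bs u^*_k) + \epsilon^k\bs u^*_k))$, nonexpansiveness of the resolvent in the $\Phi$-weighted norm combined with $\epsilon^k_{\textrm{min}}$-strong monotonicity of $\bar{\mc A}+\epsilon^k\op{Id}$ yields, for small enough $\alpha^k$,
\begin{equation*}
V_{k+1} \le (1 - c_1\alpha^k\epsilon^k_{\textrm{min}})V_k + c_2(\alpha^k)^2\|\Delta^k\|_{\Phi}^2 + 2\alpha^k\langle \bs u^k - \bs u^*_k, \Delta^k\rangle_{\Phi} + r_k,
\end{equation*}
where $r_k$ collects cross terms involving the drift $\|\bs u^*_{k+1} - \bs u^*_k\|$ of the Tikhonov target.

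To bound the drift, a perturbation argument using monotonicity of $\bar{\mc A}$ and $\bar{\mc B}$ applied to the optimality conditions for $\bs u^*_k$ and $\bs u^*_{k+1}$ gives $\|\bs u^*_{k+1} - \bs u^*_k\| \le |\epsilon^{k+1}-\epsilon^k|\|\bs u^*_{k+1}\|/\epsilon^k_{\textrm{min}}$; since $\bs u^*_k \to \bar{\bs u}$ is bounded, this scales like $k^{-1}$ under Assumption~\ref{ass_step}. Taking conditional expectation with respect to $\mc F_k$ annihilates the cross term by the zero-mean property of $\Delta^k$ embedded in Assumption~\ref{ass_error}. The Robbins–Siegmund supermartingale lemma then applies provided $\sum_k \alpha^k\epsilon^k_{\textrm{min}} = \infty$, $\sum_k (\alpha^k)^2\EE[\|\Delta^k\|_{\Phi}^2\mid\mc F_k]<\infty$, and $r_k$ is summable after dividing by the contraction rate. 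These are precisely the conditions encoded by Assumptions~\ref{ass_step}--\ref{ass_error}: $a+b<1$ ensures $\sum_k k^{-(a+b)} = \infty$, while $a>b$ ensures the residual drift is dominated by the contractive gain, forcing $V_k \to 0$ a.s.\ and hence $\bs x^k$ converges to a v-SGNE.

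The main technical obstacle is the simultaneous handling of the Tikhonov-trajectory drift and the single-sample stochastic noise within one resolvent iteration. The contractive coefficient $\alpha^k\epsilon^k_{\textrm{min}}\sim k^{-(a+b)}$ is itself vanishing, so the margin over the noise term $(\alpha^k/\epsilon^k_{\textrm{min}})\EE[\|\Delta^k\|^2\mid\mc F_k]$ and the drift $|\epsilon^{k+1}-\epsilon^k|/\epsilon^k_{\textrm{min}}$ is tight; closing the estimates requires both the condition $a>b$ of Assumption~\ref{ass_step} and the first part of Assumption~\ref{ass_error}, which are tailor-made to compensate for the bias introduced by replacing the expected-valued pseudogradient with a \emph{single} sample, the very feature that distinguishes this scheme from the SFB/SPRG/SRFB alternatives requiring $N_k\to\infty$.
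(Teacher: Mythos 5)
Your proposal is correct and follows essentially the same route as the paper: the paper's own proof is a compressed version of the Koshal--Ned\'ic--Shanbhag iterative-Tikhonov argument, comparing the iterates in the $\Phi$-induced metric with the Tikhonov trajectory $\bs y^k$ (your $\bs u^*_k$), deriving the same contraction-plus-vanishing-perturbation recursion, and invoking a Robbins--Siegmund-type lemma \cite[Lemma~4.7]{franci2022} before concluding via Remark~\ref{remark_zeros}. The only difference is one of detail, not of substance: you spell out the resolvent nonexpansiveness in the fixed $\Phi$-metric, the trajectory drift bound, and the precise role of $a>b$ and $a+b<1$, all of which the paper delegates to \cite{koshal2013}.
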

\begin{proof}
Convergence to the primal-dual solution follows similarly to \cite{koshal2013} by using the $\Phi$-induced metric. Specifically, with similar steps as \cite[Prop.~1 and 2]{koshal2013} and by letting $\bs y^k$ be the sequence generated by the centralized Tikhonov method \cite[Lemma 3]{koshal2013}, we obtain
$$\begin{aligned}
\EEk{\normsq{\bs u^{k+1}-\bs y^k}_{\Phi}}&\leq (1-c(\alpha^k,\varepsilon^k))\normsq{\bs u^{k+1}-\bs y^k}_{\Phi}\\
&+d(\alpha^k,\varepsilon^k)+\alpha_k^2\EEk{\normsq{\Delta^k}_{\Phi}}
\end{aligned}$$
where $d(\alpha^k,\varepsilon^k)$ and $\alpha_k^2\EEk{\normsq{\Delta^k}}$ vanish as $k\to\infty$ (Assumptions \ref{ass_step} and \ref{ass_error}) and $c(\alpha^k,\varepsilon^k)\in[0,1]$ is such that \cite[Lemma 4.7]{franci2022} can be applied to conclude that $\lim_{k\to\infty}\normsq{\bs u^{k+1}-\bs y^k}=0$.
Then, the statements recalled in Remark \ref{remark_zeros} guarantee convergence of $(\bs x^k)_{k \in \NN}$ to a v-SGNE of the SGNEP in \eqref{eq_game}.
\end{proof}
\begin{remark}
Although our proof follows similar steps to \cite[Prop.~1 and 2]{koshal2013}, note that their component-wise approach can be used basing on the fact that in SNEPs the feasible set reduces to the Cartesian product of the agents' local sets $\Omega_i$, $i\in\mc I$. However, in our generalized setting, the nature of the coupling constraints in \eqref{collective_set} makes this assumption not necessarily true. Moreover, this means that our proof applies to the general case of finding a zero of a monotone inclusion $0\in\bar{\mc A}(\bs u)+\bar{\mc B}(\bs u)$, independently from the fact that such an inclusion comes from the KKT conditions of the game in \eqref{eq_T}. Finally, this also explains why we resort to the step size matrix $\Phi_k$ with structure as defined in \eqref{eq_phi}. 
\fineass
\end{remark}

\subsection{Discussion on the variable step size sequence}\label{sec_discussion}

We traditionally identify two main approaches to perform stochastic approximations: either we take one sample as considered in \eqref{eq_F_SA}, or we take the average over an increasing (possibly \textit{infinite}) number of realizations. The idea behind having a large number of samples is that the variance of the stochastic error disappears with the number of iterations \cite{iusem2017,bot2021}. By following classic results in convergence analysis \cite{yi2019,franci2021,franci2020fb}, however, it turns out that this latter approach is computationally expensive, or even unrealistic in some cases, and hence taking just \emph{one} or a \textit{finite} number of samples is preferable in practice. 
%The price that has to be paid to control the approximation error, however, amounts to take a vanishing step size which, in our case, corresponds to a time-varying matrix $\Phi_k$ that may result in a time-varying metric for the convergence analysis. 
This practical simplification comes at the price of choosing a vanishing step size to control the approximation error, which in our case corresponds to the time-varying matrix $\Phi_k$, thus possibly involving a time-varying metric for the convergence analysis.
Although convergence can be guaranteed in such cases \cite{franci2022,combettes2014}, some additional assumptions on the metric should be satisfied. Specifically, the matrix $\Phi_k$ should be chosen so that
\begin{equation}\label{eq_metric}
\sup _{k \in \mathbb{N}}\|\Phi_k^{-1}\|<\infty \text { and } \forall k >0 \,\,\,(1+\eta_{k}) \Phi_{k+1}^{-1} \succcurlyeq \Phi_k^{-1},
\end{equation}
where $(\eta^k)_{k\in\NN}$ is a nonnegative sequence such that $\sum_k\eta^k<\infty$.
Unfortunately, this contradicts the fact that the step size sequence should be decreasing. Loosely speaking, the motivation for \eqref{eq_metric} stems from the fact that, with a variable metric, it is hard to prove whether the algorithm converges to a zero of the mapping or to a zero of the step sequences. On the other hand, given the specific structure of our matrix $\Phi_k$, we overcome this issue by considering a fixed matrix $\Phi$ that we can use as a metric, and then by pre-multiplying $\Phi$ with a vanishing step as in \eqref{eq_phi}. Note that this formulation allows us to preserve the distributed nature of the algorithm.

We also note that a ``separable" matrix as $\Phi_k$ in \eqref{eq_phi} cannot be used for every iterative distributed algorithm. In the standard SFB \cite{franci2020fb}, for instance, the matrix $\Phi_k$ serves as a preconditioning matrix that has non-zero off-diagonal entries. Pre-multiplying such entries for a quantity (even fixed) would compromise the corresponding KKT conditions, thus making impossible to produce distributed iterations.

%Let us also note that for other algorithms, e.g., the Forward-Backward algorithm \cite{franci2020fb} (with $\varepsilon^k=0$ for all $k\in\NN$), such a trick cannot be used because the splitting requires a preconditioning matrix that is not just a constant diagonal matrix. In fact, a fixed step size sequence and an increasing number of samples is taken to prove convergence.

\section{Numerical simulations}\label{sec:num_sim}
%\begin{table}
%	\caption{Simulation parameters}
%	\label{tab:sim_val}
%	\centering
%	\begin{tabular}{llll}
%		\toprule
%		Parameter  & Unit & Description   & Value \\
%		\midrule
%		$\bar{p}$ & \$ & Maximum service price & $35$\\
%		$\bar{p}_h$ & \$ & Area price cap & $\sim\mc{U}(0.65  \bar{p}, 0.95 \bar{p})$\\
%		$\beta$ & & Commission rate & $0.9$\\
%		$\underline{w}$ & \$ & Wage lower bound & $12$\\
%		$\theta_i$ &  & Competition parameters & $\sim\mc{U}(0.6, 1)$\\
%		$C_h$ &  & Area costumers' demand & $\sim\mc{U}(5, 12)\times10^3$\\
%		$K_{i,h}$ &  & Registered drivers & $\sim\mc{U}(0.5, 3)\times10^3$\\
%		\bottomrule
%	\end{tabular}
%\end{table}
\begin{figure}[t!]
	\centering
	\includegraphics[width=\columnwidth]{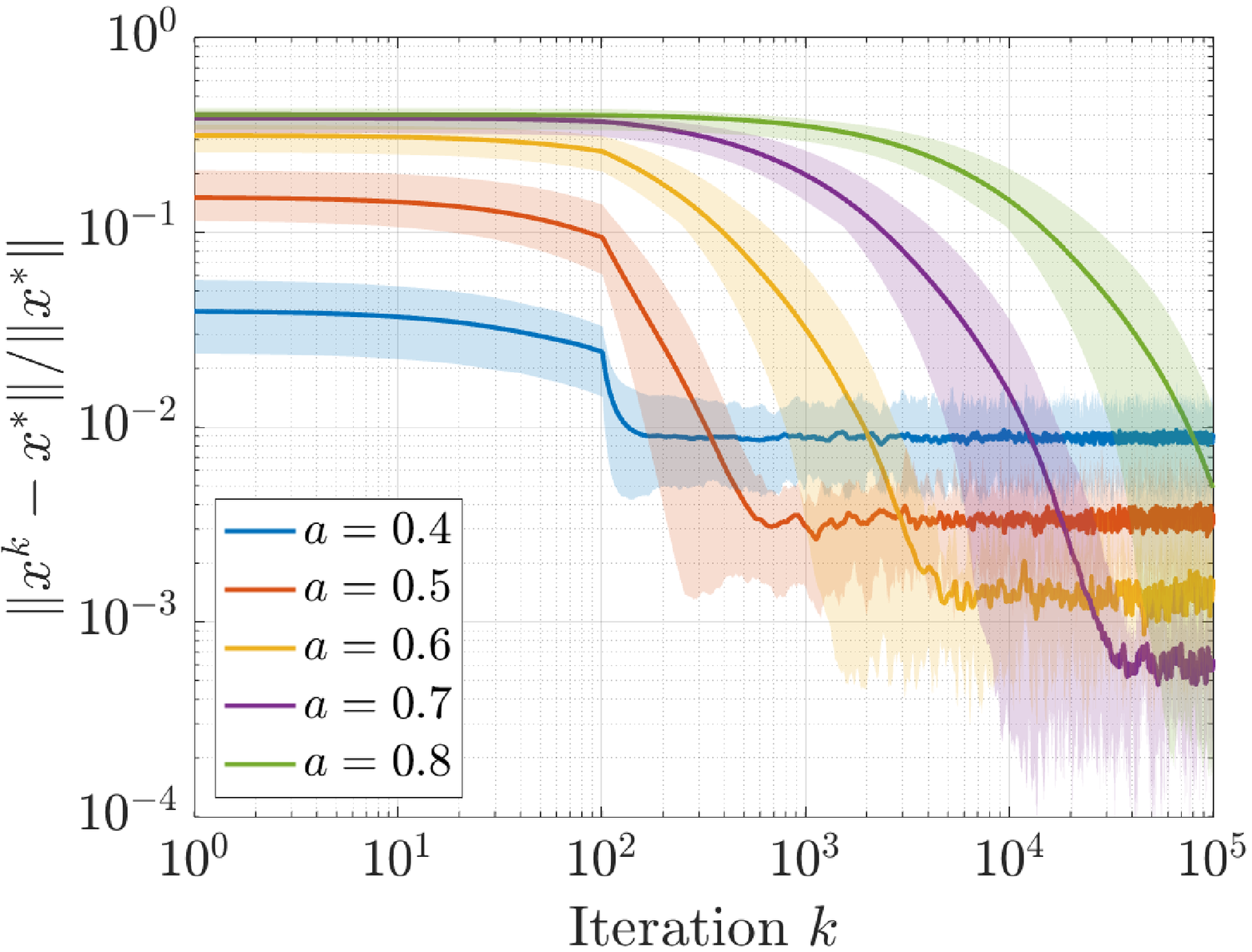}
	\caption{Effect of the parameter $\eta$ of step size, according to Assumption \ref{ass_step}, on the convergence rate.}
	\label{fig_a}
\end{figure}
\begin{figure}[t!]
	\centering
	\includegraphics[width=\columnwidth]{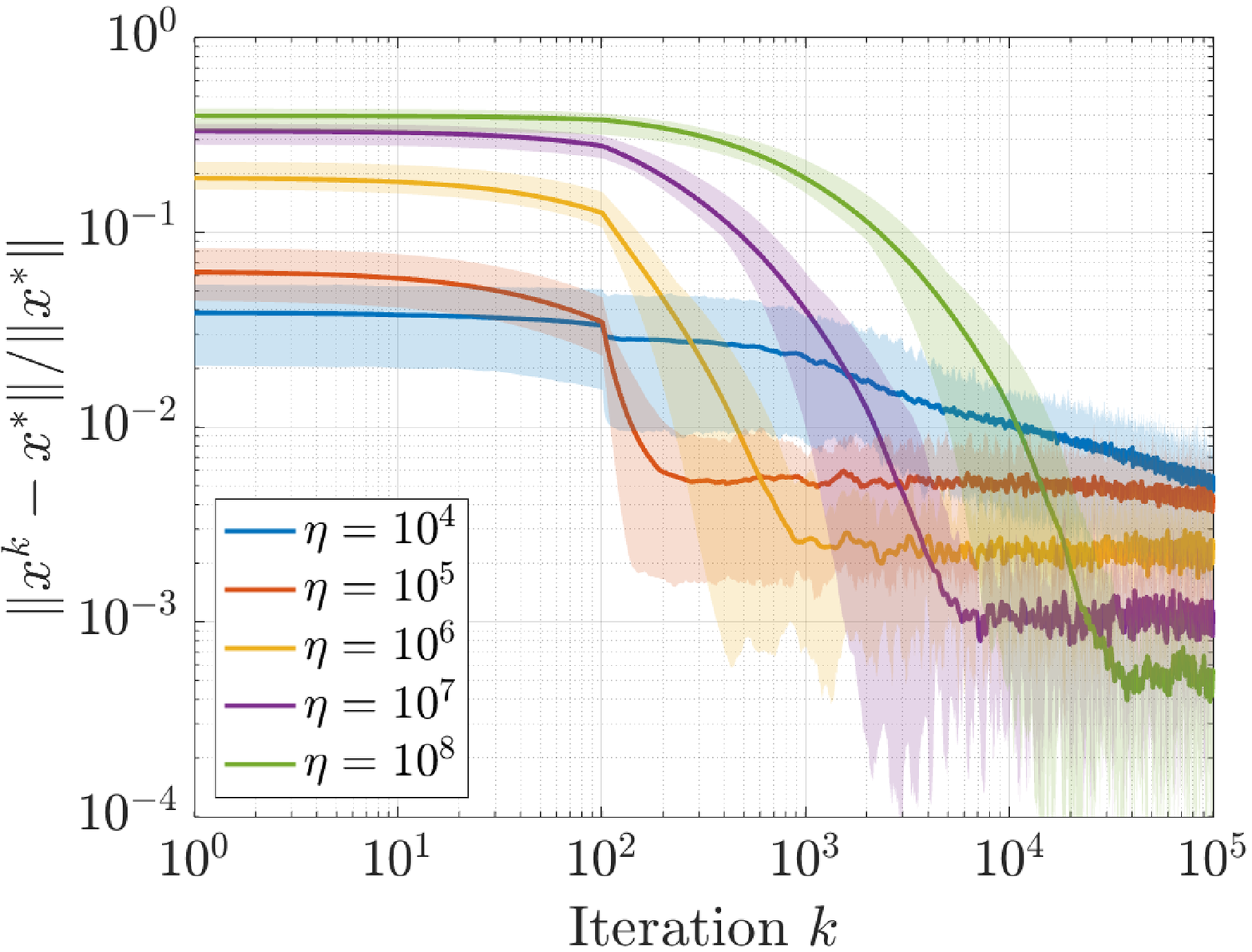}
	\caption{Effect of the parameter $\eta$ of step size, according to Assumption \ref{ass_step}, on the convergence rate.}
	\label{fig_eta}
\end{figure}

\begin{table}
	\caption{Simulation parameters}
	\label{tab:sim_val}
	\centering
	\begin{tabular}{llll}
		\toprule
		Parameter  & Unit & Description   & Value \\
		\midrule
		$\bar{p}$ & \$ & Maximum service price & $35$\\
		$\bar{p}_h$ & \$ & Area price cap & $\sim\mc{U}(0.65  \bar{p}, 0.95 \bar{p})$\\
		$\beta$ & & Commission rate & $0.9$\\
		$\underline{w}$ & \$ & Wage lower bound & $12$\\
		$\theta_i$ &  & Competition parameters & $\sim\mc{U}(0.6, 1)$\\
		$C_h$ &  & Area costumers' demand & $\sim\mc{U}(5, 12)\times10^3$\\
		$K_{i,h}$ &  & Registered drivers & $\sim\mc{U}(0.5, 3)\times10^3$\\
		\bottomrule
	\end{tabular}
\end{table}

We now validate both the model in \S \ref{sec:app} and Algorithm~\ref{algo_i} numerically. Specifically, we consider an instance of the competition among $N = 5$ on-demand ride-hailing firms over $|\mc{H}| = 10$ areas with main parameters in Table~\ref{tab:sim_val}.

\begin{figure}[t!]
	\centering
	\includegraphics[width=\columnwidth]{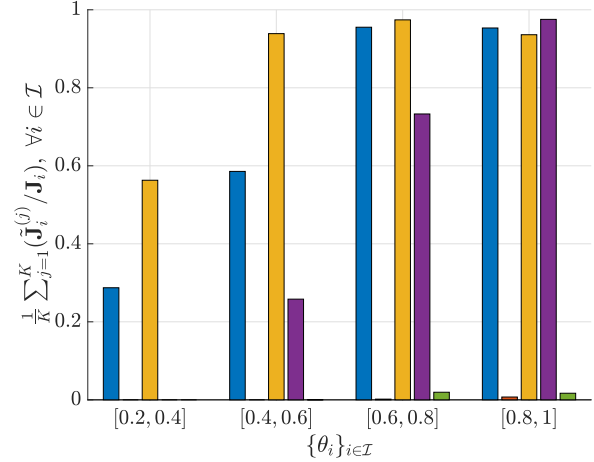}
	\caption{Impact of the substitutability parameter on the averaged percentage of expected profit, for each firm $i \in \mc{I}$.}
	\label{fig:averaged_percentage_profit}
\end{figure}
\begin{figure}[t!]
	\centering
	\includegraphics[width=\columnwidth]{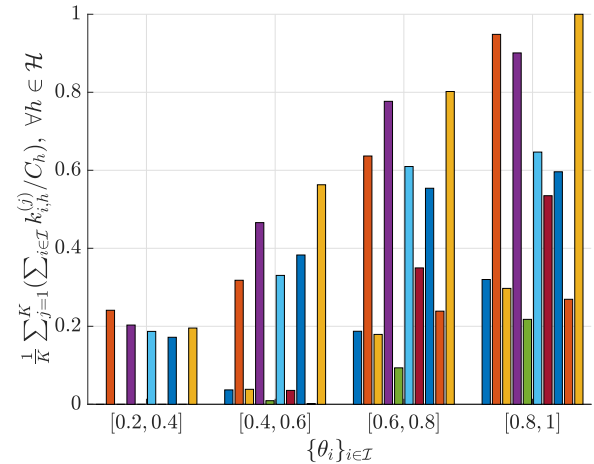}
	\caption{Impact of the substitutability parameter on the averaged costumers' demand satisfaction, for each area $h \in \mc{H}$.}
	\label{fig:averaged_demand_match}
\end{figure}

First, we test the effect of the step size sequence on the convergence of Algorithm~\ref{algo_i} by recalling that, in view of Assumption~\ref{ass_step}, $\alpha^k=(k+\eta)^{-a}$. In particular, Fig.~\ref{fig_a} shows how the rate of convergence is affected by the exponent $a$ while in Fig.~\ref{fig_eta} we plot the effect of the base $\eta$. 
In the first example we choose $\eta = 10^8$ and $\varepsilon^k=(k+10^6)^{-0.15}$, while for the second one $a=0.7$ and $\varepsilon^k=(k+10^6)^{-0.2}$. The thick lines indicate the average performance while the transparent areas are the variability over $10$ runs of the algorithm.
%\begin{figure}[t!]
%	\centering
%	\includegraphics[width=\columnwidth]{coeff_a_avg_10.eps}
%	\caption{Effect of the parameter $\eta$ of step size, according to Assumption \ref{ass_step}, on the convergence rate.}
%	\label{fig_a}
%\end{figure}
%\begin{figure}[t!]
%	\centering
%	\includegraphics[width=\columnwidth]{eta_avg_10.eps}
%	\caption{Effect of the parameter $\eta$ of step size, according to Assumption \ref{ass_step}, on the convergence rate.}
%	\label{fig_eta}
%\end{figure}

%\begin{figure}[t!]
%	\centering
%	\includegraphics[width=\columnwidth]{averaged_percentage_profit_2.eps}
%	\caption{Impact of the substitutability parameter on the averaged percentage of expected profit, for each firm $i \in \mc{I}$.}
%	\label{fig:averaged_percentage_profit}
%\end{figure}
%\begin{figure}[t!]
%	\centering
%	\includegraphics[width=\columnwidth]{averaged_demand_match.eps}
%	\caption{Impact of the substitutability parameter on the averaged costumers' demand satisfaction, for each area $h \in \mc{H}$.}
%	\label{fig:averaged_demand_match}
%\end{figure}

Successively, we examine how the competition parameters $\theta_i$, $i \in \mc{I}$, affect the equilibrium solution of the SGNEP in \eqref{eq:plat_game} against $K = 1000$ random realizations of $\delta_{i,h}$, i.e., the willingness of the driver to provide service. In view of \eqref{eq:driv_service}, given some drivers' expectation level $\delta^{(j)}_{i,h} \sim \mc{U}(\underline{w}, \bar{w}_{i,j})$ and an equilibrium strategy profile $\bs{x}^\ast = \op{col}((x^\ast_{i})_{i \in \mc{I}})$ with $x^\ast_{i} = \op{col}((p^\ast_{i,h})_{h \in \mc{H}})$, if $\delta^{(j)}_{i,h} \leq \beta p^\ast_{i,h}$ then $k^{(j)}_{i,j} = K_{i,j}$. The number of drivers providing a service allows each firm to actually request a payment from the costumers, thus earning $\tilde{\bs{J}}_i^{(j)} \coloneqq \EE_\xi[\sum_{h \in \mc{H}} p^\ast_{i,h} (d^{\textrm{e},(j)}_{i,h}(\xi) - \beta k^{(j)}_{i,h})]$. This value, however, can be different (possibly smaller) from the expected profit $\bs{J}_i \coloneqq \EE_\xi[\sum_{h \in \mc{H}} p^\ast_{i,h} (d^{\textrm{e}}_{i,h}(\xi) - \beta k_{i,h})]$, i.e., the value function associated to the equilibrium condition. Then, for each firm $i \in \mc{I}$, Fig.~\ref{fig:averaged_percentage_profit} shows how the ratio between these two quantities, resulting into a percentage of expected profit, changes with the level of competition in the ride-hailing market. Specifically, we note that when firms operate in an almost oligopoly regime, i.e., $\theta_i \in [0.2, 0.4]$ for all $i \in \mc{I}$, the averaged percentage is small and grows when the market becomes more competitive. For small values of $\theta_i$, indeed, each company is almost independent and tends to select prices to match the lower bound $\underline{w}/\beta$ -- this also coincides with the strategy adopted by firms with only few registered drivers, thus explaining the behavior of firms $2$ and $5$ -- whereas for larger values of $\theta_i$ the firms are entitled to significantly raise their prices, thus allowing to meet drivers' expectations with a higher probability. Such trend is confirmed by the numerical results shown in Fig.~\ref{fig:averaged_demand_match}, which reports the costumers' demand satisfaction, measured as $\sum_{i \in \mc{I}} k^{(j)}_{i,h}/C_h$, for each area $h \in \mc{H}$. In almost oligopoly regimes the costumers' request is met few times only, while it grows significantly when also the competition increases. 

\section{Conclusion}
We have proposed a model for the ride-hailing market under a regulated pricing scenario involving several platforms that compete to offer mobility services. The model takes into account the natural uncertainty of the demand and the need for the platforms to set up suitable pricing strategies to attract riders and fair wages to secure a fleet of drivers. To optimize the operations of these interdependent problems, we have recasted the model as a stochastic Nash equilibrium problem for which we have proposed a distributed, Tikhonov regularization-based algorithm that enjoys convergence guarantees to a Nash equilibrium. In particular, the proposed equilibrium seeking method leverages only a finite number of samples of the uncertainty to perform the stochastic approximation, as well as it requires mere monotonicity of the pseudogradient mapping to establish convergence.

%%%%%%%%%%%%%%%%%%%%%%%%%%%%%%%%%%%%%%%%%%%%%%%%%%%%%%%%%%%%%%%%%%%%%%%%%%%%%%%%
\balance
\bibliographystyle{IEEEtran}
\bibliography{Biblio}

\end{document}